\theoremstyle{plain}
\newtheorem{theorem}                 {Theorem}      [section]
\newtheorem{free-theorem}            {Theorem}
\newtheorem{proposition}  [theorem]  {Proposition}
\newtheorem{lemma}        [theorem]  {Lemma}
\theoremstyle{definition}
\newtheorem{example}      [theorem]  {Example}
\newtheorem{definition}   [theorem]  {Definition}
\numberwithin{equation}{section}
\def \theo-intro#1#2 {\vskip .25cm\noindent{\bf Theorem #1\ }{\it #2}}
\newcommand{\trace}{\operatorname{trace}}
\def \zn{\mathbb Z}
\def \rn{\mathbb R}
\def \cn{\mathbb C}
\def \Sym{\mathrm{Sym}}
\def \B{\mathcal B}
\def \E{\mathcal E}
\def \Q{\mathcal Q}
\def \W{\mathcal W}
\def \Z{\mathcal Z}
\def \ip #1#2{\langle #1,#2 \rangle}
\def \iq #1#2{( #1,#2 )}
\def \proc#1{\cn P^{#1}}
\def \g{\mathfrak{g}}
\def \t{\mathfrak{t}}
\def \W{\mathcal{W}}
\DeclareMathOperator{\Ad}{Ad}
\DeclareMathOperator{\End}{End}
\DeclareMathOperator{\Hom}{Hom}
\def\span{\mathrm{span}}
\def \GLC#1{\text{\bf GL}_{#1}(\cn)}
\def \glc#1{\mathfrak{gl}_{#1}(\cn)}
\def \slc#1{\mathfrak{sl}_{#1}(\cn)}
\def \SO#1{\text{\bf SO}(#1)}
\def \U#1{\text{\bf U}(#1)}
\def \SU#1{\text{\bf SU}(#1)}
\def \su#1{\mathfrak{su}(#1)}
\def \Sp#1{\text{\bf Sp}(#1)}
\def \nab#1#2{\hbox{$\nabla$\kern -.3em\lower 1.0 ex
 \hbox{$#1$}\kern -.1 em {$#2$}}}
\begin{document}
\baselineskip 22pt \larger

\allowdisplaybreaks

\title
{Complete minimal submanifolds \\ of compact Lie groups}

\author
{Sigmundur Gudmundsson, Martin Svensson and Marina Ville}

\keywords
{harmonic morphisms, minimal submanifolds, Lie groups}

\subjclass[2010]
{58E20, 53C43, 53C12}

\address
{Department of Mathematics, Faculty of Science, Lund University,
Box 118, S-221 00 Lund, Sweden}
\email{Sigmundur.Gudmundsson@math.lu.se}

\address
{Department of Mathematics \& Computer Science, University of
Southern Denmark, Campusvej 55, DK-5230 Odense M, Denmark}
\email{svensson@imada.sdu.dk}

\address
{D\' epartment de Math\' ematiques, Universit\' e F. Rabelais,
  37000 Tours, France}
\email{Marina.Ville@lmpt.univ-tours.fr}

\begin{abstract}
We give a new method for manufacturing complete minimal
submanifolds of compact Lie groups and their
homogeneous quotient spaces.  For this we make use of harmonic
morphisms and basic representation theory of Lie groups.
We then employ our method to construct many examples of compact
minimal submanifolds of the special unitary groups.
\end{abstract}

\maketitle


\section{Introduction}

In this paper we introduce a new method for constructing
complete minimal submanifolds of compact Lie groups and their
homogeneous quotient spaces.  Our most important ingredients
are harmonic morphisms and some basic representation theory of
compact Lie groups.

Complex-valued harmonic morphisms on Riemannian manifolds
are harmonic functions which
additionally satisfy the non-linear condition of horizontal
conformality.  They are in general difficult to find but in
several important cases they can be constructed via the so-called
eigenfamily method described below.  The elements of such a
family are eigenfunctions of
the Laplace-Beltrami operator
diagonalizing a bilinear operator associated with the non-linear
horizontal conformality condition.  This is where representation
theory comes into play.

We apply our method to the standard representations of
the simple Lie groups $\SU n$, $\SO n$ and $\Sp n$.  This yields
well-known eigenfamilies on these space, already constructed
in \cite{Gud-Sak-1}.

Then we focus our attention on the representation $\slc n$
of the special unitary group $\SU n$.  This gives many
{\it new} eigenfamilies on $\SU n$ and its various homogeneous
quotient spaces as flag manifolds.  In the last section we then
construct an interesting continuous
family of minimal submanifolds of $\SU n$, all of codimension $2$:

\begin{theorem}
Let $H$ be an $n\times n$ complex matrix which has $n$ different
eigenvalues. Then the compact subset
$$
M=\{z=(z_1,...,z_n)\in\SU n |\ z_1^tH\bar z_2=0\}
$$
of the special unitary group is a minimal submanifold of codimension
two.
\end{theorem}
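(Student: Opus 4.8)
The plan is to present $M$ as the zero set of the complex-valued function
\[
\phi\colon\SU n\to\cn,\qquad \phi(z)=z_1^tH\bar z_2,
\]
so that $M=\phi^{-1}(0)$, and to reduce minimality to a criterion of the kind used throughout the paper. Concretely I would: (i) record the following general fact, call it the \emph{minimality criterion} — if $\phi\colon(N,g)\to\cn$ satisfies the two identities $\tau(\phi)=\lambda\phi$ and $\kappa(\phi,\phi)=\mu\phi^{2}$ for constants $\lambda,\mu\in\cn$, where $\tau$ is the tension field of $\phi$ (the Laplace--Beltrami operator on each component) and $\kappa(\phi,\psi)=\langle\nabla\phi,\nabla\psi\rangle$ is the $\cn$-bilinear conformality operator, then at every point where $d\phi\neq 0$ the set $\phi^{-1}(0)$ is a minimal submanifold of codimension two; (ii) verify these two identities for $\phi=\phi_H$, for every matrix $H$; and (iii) show, using the hypothesis on the eigenvalues of $H$, that $d\phi$ never vanishes on $M$.

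For (i) I would argue pointwise along $\phi^{-1}(0)$. Writing $\phi=u+iv$, the identity $\kappa(\phi,\phi)=\mu\phi^{2}$ restricted to $\phi^{-1}(0)$ gives $|\nabla u|=|\nabla v|=:\rho$ and $\nabla u\perp\nabla v$; in particular, where $d\phi\neq0$ we have $\rho>0$, the pair $\{\nabla u,\nabla v\}$ is an orthogonal normal frame of $\phi^{-1}(0)$, and $\phi^{-1}(0)$ is a codimension-two submanifold there. Tracing the second fundamental form against this frame and collecting the two components into one complex equation yields, along $\phi^{-1}(0)$,
\[
\langle\mathbf H,\nabla u\rangle+i\,\langle\mathbf H,\nabla v\rangle \;=\; -\,\tau(\phi)+\tfrac1{\rho^{2}}\operatorname{Hess}\phi(\nabla\phi,\nabla\bar\phi),
\]
$\mathbf H$ denoting the mean curvature vector. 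The first term is $-\lambda\phi=0$, and differentiating $\kappa(\phi,\phi)=\mu\phi^{2}$ in the direction $\nabla\bar\phi$ gives $\operatorname{Hess}\phi(\nabla\phi,\nabla\bar\phi)=\mu\,\phi\,\kappa(\bar\phi,\phi)$, which likewise vanishes on $\phi^{-1}(0)$. Hence $\mathbf H=0$ there.

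For (ii), write $\phi_H=\sum_{k,l}H_{kl}\,z_{k1}\bar z_{l2}$, a $\cn$-linear combination of the functions $z_{k1}\bar z_{l2}$. The family $\{z_{k1}\}_k$ is the eigenfamily coming from the first column of the standard representation of $\SU n$, $\{\bar z_{l2}\}_l$ that of its conjugate, and — because the two columns involved are \emph{distinct} — the standard $\SU n$ formulas for $\tau$ and $\kappa$ on matrix coefficients (as in \cite{Gud-Sak-1}) give $\kappa(z_{k1},\bar z_{l2})=\nu\,z_{k1}\bar z_{l2}$ for a constant $\nu$. Using the Leibniz rules $\tau(fg)=f\tau(g)+g\tau(f)+2\kappa(f,g)$ and $\kappa(fg,hk)=fh\,\kappa(g,k)+fk\,\kappa(g,h)+gh\,\kappa(f,k)+gk\,\kappa(f,h)$, it follows that $\{z_{k1}\bar z_{l2}\}_{k,l}$ is itself an eigenfamily with universal constants $\lambda,\mu$; equivalently this is the eigenfamily attached to the representation $\slc n$ of the previous section, pulled back by the isometry $z\mapsto z^{t}$. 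In either case every element of its span — so $\phi_H$, for \emph{every} $H$ — satisfies $\tau(\phi_H)=\lambda\phi_H$ and $\kappa(\phi_H,\phi_H)=\mu\phi_H^{2}$.

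Step (iii) is the crux and the place where the eigenvalue hypothesis is genuinely used. Differentiating along the left-invariant fields $z\mapsto zX$ ($X\in\su n$) gives $d\phi_z(zX)=\sum_k X_{k1}(z_k^tH\bar z_2)+\sum_k\bar X_{k2}(z_1^tH\bar z_k)$; on $M$ the terms carrying $z_1^tH\bar z_2=\phi(z)=0$ drop out, and letting the remaining independent entries of $X$ vary (note that skew-Hermiticity couples $X_{12}$ with $X_{21}$) shows that $d\phi_z=0$ forces $z_k^tH\bar z_2=0$ for all $k\neq2$, $z_1^tH\bar z_k=0$ for all $k\neq1$, and $z_2^tH\bar z_2=z_1^tH\bar z_1$. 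Since $\{z_k\}$ and $\{\bar z_k\}$ are orthonormal bases, the first two batches say precisely that $\bar z_2$ is a right eigenvector of $H$ and $z_1^t$ a left eigenvector of $H$, while the third forces the two eigenvalues to agree. Thus $z_1^t$ and $\bar z_2$ are left and right eigenvectors of $H$ for one and the same eigenvalue, so — since $H$ has simple spectrum — biorthogonality gives $z_1^t\bar z_2\neq0$; but $z_1^t\bar z_2=(z^{*}z)_{21}=0$ because $z_1,z_2$ are distinct columns of a unitary matrix. This contradiction shows $d\phi\neq0$ on $M$, so $M$ is a compact embedded codimension-two submanifold (nonemptiness is elementary to check), and by (i) it is minimal. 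I expect the genuinely delicate points to be the bookkeeping in (iii) — making sure the would-be critical points are eliminated exactly by simplicity of the spectrum — and, to a lesser extent, extracting $\kappa(z_{k1},\bar z_{l2})=\nu\,z_{k1}\bar z_{l2}$ from the $\SU n$ structure constants in (ii); minimality itself is then automatic.
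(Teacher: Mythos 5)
Your proposal is correct, but both decisive steps are carried out by routes different from the paper's. For minimality the paper never touches a second fundamental form: it writes $H=A-B$ with $A,B$ linearly independent, gets from Example \ref{exam-tensor} and Theorem \ref{theo:rational} the harmonic morphism $z\mapsto[z_1^tA\bar z_2,\,z_1^tB\bar z_2]$ into $\proc 1$, and quotes Baird--Eells (Theorem \ref{theo:B-E}) so that the fibre over $[1,1]$, a dense open subset of $M$, is minimal, whence $M$ is. Your step (i) re-proves that mechanism directly: $\kappa(\phi,\phi)=\mu\phi^2$ gives conformality along $\phi^{-1}(0)$ (which also makes explicit that $d\phi$ has rank two there, a point the paper leaves implicit when passing from $\nabla\Phi\neq 0$ to smoothness), and your mean-curvature identity together with $\tau(\phi)=\lambda\phi$ and the differentiated $\kappa$-relation gives $\mathbf{H}=0$; I checked the identity and the vanishing of the Hessian term, and they are correct. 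This is self-contained and avoids the auxiliary pair $(A,B)$, the exceptional set and the closure argument, at the cost of redoing what the paper gets for free from \cite{Bai-Eel}. For regularity the paper uses the gradient formula $\nabla\Phi(z)=\sum_X\ip{[X,H]}{z^{-1}(e_2\otimes e_1)}X$, the holomorphic extension to $\SLC n$ (Lemma \ref{lemm-gradient vanishes}), diagonalization $H=PDP^{-1}$ and nilpotency of the rank-one endomorphism $z^{-1}(e_2\otimes e_1)$ to produce a good direction $X_0$, in fact showing $d\Phi\neq0$ on all of $\SU n$; your step (iii) works only on $M$ (which suffices) and is a genuinely different, more elementary argument: a critical point would make $z_1^t$ and $\bar z_2$ left and right eigenvectors of $H$ for the same eigenvalue, and simplicity of the spectrum gives $z_1^t\bar z_2\neq 0$, contradicting unitarity; your bookkeeping with the coupled entries $X_{21},X_{12}$ and the identity $z_2^tH\bar z_2=z_1^tH\bar z_1$ is exactly right. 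Step (ii) coincides with Example \ref{exam-tensor} (equivalently the product argument via Lemma A.1 of \cite{Gud-Sak-1}, where in fact $\kappa(z_{k1},\bar z_{l2})=0$, i.e.\ your $\nu$ vanishes).
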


For an introduction to representation theory we highly recommend
the excellent text \cite{Ful-Har}.

\section{Harmonic Morphisms}

Let $M$ and $N$ be two manifolds of dimension $m$ and $n$,
respectively. Then a Riemannian metric $g$ on $M$ gives rise
to the notion of a Laplacian on $(M,g)$ and real-valued harmonic
functions $f:(M,g)\to\rn$. This can be generalized to the concept
of a harmonic map $\phi:(M,g)\to (N,h)$ between Riemannian
manifolds being a solution to a semi-linear system of partial
differential equations, see \cite{Bai-Woo-book}.

\begin{definition}\cite{Fug-1},\cite{Ish}
A map $\phi:(M,g)\to (N,h)$ between Riemannian manifolds is
called a {\it harmonic morphism} if, for any harmonic function
$f:U\to\rn$ defined on an open subset $U$ of $N$ with
$\phi^{-1}(U)$ non-empty, the composition
$f\circ\phi:\phi^{-1}(U)\to\rn$ is a harmonic function.
\end{definition}

The following characterization of harmonic morphisms between
Riemannian manifolds is due to Fuglede and Ishihara.
For the definition of horizontal
(weak) conformality we refer to \cite{Bai-Woo-book}.

\begin{theorem}\cite{Fug-1},\cite{Ish}
A map $\phi:(M,g)\to (N,h)$ between Riemannian manifolds is a
harmonic morphism if and only if it is a horizontally (weakly)
conformal harmonic map.
\end{theorem}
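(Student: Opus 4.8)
The engine of the proof is the composition (chain-rule) formula for the tension field \cite{Bai-Woo-book}: for a smooth map $\phi:(M,g)\to(N,h)$ and a smooth function $f:N\to\rn$,
\[
\tau(f\circ\phi)=df(\tau(\phi))+\trace_g\nabla df(d\phi,d\phi),
\]
where $\tau(\phi)$ is the tension field of $\phi$, $\nabla df$ is the Hessian of $f$ on $N$, and $\trace_g\nabla df(d\phi,d\phi)=\sum_{i=1}^{m}\nabla df(d\phi(e_i),d\phi(e_i))$ for a local orthonormal frame $e_1,\dots,e_m$ of $(M,g)$. Since the tension field of a real-valued function is its Laplacian, $f\circ\phi$ is harmonic exactly when the left-hand side vanishes. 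I would repackage the trace term through the fibrewise self-adjoint, positive semi-definite endomorphism $A_p=d\phi_p\circ(d\phi_p)^{*}$ of $T_{\phi(p)}N$, where $(d\phi_p)^{*}$ is the metric adjoint of $d\phi_p$; cyclicity of the trace gives $\trace_g\nabla df(d\phi,d\phi)(p)=\trace\bigl(H_{\phi(p)}A_p\bigr)$, where $H_q$ denotes $\nabla df$ at $q$ viewed as a self-adjoint endomorphism of $T_qN$ via $h$. In this language, horizontal weak conformality at $p$ is precisely the assertion that $A_p=\lambda(p)^2\,\mathrm{Id}$ for some $\lambda(p)\ge 0$, the dilation.

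The implication that a horizontally weakly conformal harmonic map is a harmonic morphism is then immediate. Assuming $\tau(\phi)=0$ and $A_p=\lambda(p)^2\,\mathrm{Id}$, for any function $f$ harmonic near $q=\phi(p)$ the formula yields
\[
\tau(f\circ\phi)(p)=\trace\bigl(H_qA_p\bigr)=\lambda(p)^2\,\trace H_q=\lambda(p)^2\,(\Delta_N f)(q)=0,
\]
so $f\circ\phi$ is harmonic and $\phi$ is a harmonic morphism.

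For the converse I would argue pointwise, fixing $p\in M$ and $q=\phi(p)$, exploiting the abundance of local harmonic functions on $N$. The analytic input is the standard local existence statement: for any linear form $\ell$ on $T_qN$ and any trace-free self-adjoint endomorphism $B$ of $T_qN$ there is a function $f$, harmonic near $q$, with $df_q=\ell$ and $\nabla df_q=B$. (In normal coordinates at $q$ one writes the quadratic polynomial with the prescribed $1$-jet and Hessian; because $B$ is trace-free its Laplacian vanishes to leading order at $q$, and the remaining error is removed by solving a Poisson equation with a correction of vanishing $2$-jet.) Granting this, the composition formula at $p$ reads $0=\ell(\tau(\phi)(p))+\trace(BA_p)$ for every admissible pair $(\ell,B)$. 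Taking $B=0$ and letting $\ell$ vary forces $\tau(\phi)(p)=0$, so $\phi$ is harmonic at every point; the relation then reduces to $\trace(BA_p)=0$ for all trace-free self-adjoint $B$ on $T_qN$.

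It remains to run the linear algebra and to locate the genuine difficulty. On the space of self-adjoint endomorphisms of $T_qN$ with inner product $(S,T)\mapsto\trace(ST)$, the trace-free ones are exactly the orthogonal complement of the scalar multiples of the identity; hence $\trace(BA_p)=0$ for all trace-free self-adjoint $B$ says precisely that $A_p=\mu(p)\,\mathrm{Id}$, with $\mu(p)\ge 0$ since $A_p$ is positive semi-definite. Writing $\lambda(p)=\sqrt{\mu(p)}$, this is horizontal weak conformality: if $\mu(p)>0$ then $A_p$ is invertible, $d\phi_p$ is surjective, and since $(d\phi_p)^{*}d\phi_p$ shares the nonzero spectrum of $A_p$ it equals $\mu(p)\,\mathrm{Id}$ on the horizontal space $(\ker d\phi_p)^{\perp}$, a conformal isomorphism onto $T_qN$ with dilation $\lambda(p)$; if $\mu(p)=0$ then $d\phi_p=0$, the admissible degenerate case. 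Together with $\tau(\phi)\equiv 0$ this shows $\phi$ is a horizontally weakly conformal harmonic map. I expect the only real obstacle to be the local existence lemma for harmonic functions with prescribed $1$-jet and trace-free Hessian; the composition formula and the elementary fact that the scalars are the orthogonal complement of the trace-free symmetric endomorphisms do the rest.
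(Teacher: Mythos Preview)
The paper does not actually prove this theorem; it merely quotes the result with attribution to Fuglede \cite{Fug-1} and Ishihara \cite{Ish}, so there is no in-paper proof to compare against. Your argument is correct and is essentially the classical Fuglede--Ishihara proof as presented, for instance, in \cite{Bai-Woo-book}: the composition formula for the tension field, the identification of horizontal weak conformality with $d\phi_p\circ(d\phi_p)^{*}$ being a multiple of the identity, and the local existence of harmonic functions with prescribed first jet and trace-free Hessian are exactly the ingredients used there. The only point you flag as delicate---the existence lemma for harmonic functions with prescribed $2$-jet data---is indeed the one nontrivial analytic input, and your sketch (quadratic polynomial in normal coordinates plus a Poisson correction with vanishing $2$-jet) is the standard way to obtain it.
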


The next result of Baird and Eells gives the theory of harmonic
morphisms a strong geometric flavour.  It shows that when the
codomain $N$ is a surface the conditions characterizing harmonic
morphisms are independent of conformal changes of the metric on $N$.

\begin{theorem}\cite{Bai-Eel}\label{theo:B-E}
Let $\phi:(M,g)\to (N^2,h)$ be a horizontally conformal submersion
from a Riemannian manifold to a surface.  Then $\phi$ is harmonic
if and only if $\phi$ has minimal fibres.
\end{theorem}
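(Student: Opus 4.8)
The plan is to reduce the statement to the fundamental tension-field formula of Baird and Eells for horizontally conformal submersions, and to exploit the special role of the dimension $n=2$. Write $m=\dim M$, let $\V$ and $\H$ denote the vertical (tangent to the fibres) and horizontal distributions, and let $\lambda$ be the dilation, so that horizontal conformality reads $h(d\phi(X),d\phi(Y))=\lambda^2\,g(X,Y)$ for $X,Y\in\H$. First I would compute the tension field $\tau(\phi)=\trace_g\nabla d\phi$ in a local orthonormal frame adapted to the splitting $TM=\V\oplus\H$, namely $\{V_1,\dots,V_{m-n}\}$ spanning $\V$ and $\{X_1,\dots,X_n\}$ spanning $\H$, and split the trace into its vertical and horizontal contributions.

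For the vertical contribution, since $d\phi$ annihilates $\V$ one has $\nabla^\phi_{V_a}d\phi(V_a)=0$, so only the term $-d\phi(\nabla_{V_a}V_a)$ survives; its vertical part is again killed by $d\phi$, and summing the horizontal parts $\sum_a(\nabla_{V_a}V_a)^{\H}$ produces exactly the trace of the second fundamental form of the fibres, that is, (a multiple of) their mean curvature vector $\mu^F$, which is a horizontal field. For the horizontal contribution I would use that $\{\lambda^{-1}d\phi(X_\alpha)\}$ is orthonormal on $N$ and carry out the standard computation of $\sum_\alpha(\nabla d\phi)(X_\alpha,X_\alpha)$; the outcome is a term proportional to $d\phi(\gradh{\ln\lambda})$ carrying the numerical factor $(2-n)$. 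Collecting the two pieces yields the Baird--Eells formula
\[
\tau(\phi)=-(m-n)\,d\phi(\mu^F)+(2-n)\,d\phi(\gradh{\ln\lambda}).
\]

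The decisive observation is now purely numerical: when $N$ is a surface, $n=2$, so the dilation term $(2-n)\,d\phi(\gradh{\ln\lambda})$ vanishes identically and the formula collapses to $\tau(\phi)=-(m-2)\,d\phi(\mu^F)$. Since $\phi$ is a submersion, $d\phi$ restricts to a linear isomorphism $\H_p\xrightarrow{\sim}T_{\phi(p)}N$; as $\mu^F$ is horizontal, $d\phi(\mu^F)=0$ if and only if $\mu^F=0$. Hence, provided $m>2$ so that $m-2\neq 0$, the map $\phi$ is harmonic ($\tau(\phi)=0$) precisely when its fibres are minimal ($\mu^F=0$). The remaining case $m=n=2$ is trivial, the fibres being points and both sides of the equivalence holding automatically.

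I expect the main obstacle to be the horizontal computation that isolates the factor $(2-n)$: because the pushed-forward frame $\{d\phi(X_\alpha)\}$ is only conformal and not orthonormal, one must differentiate $\lambda$ carefully and track how the Levi-Civita connection of $(N,h)$ interacts with the conformal rescaling, keeping control of which terms are horizontal and which are annihilated by $d\phi$. This is exactly the analytic reflection of the conformal invariance of harmonicity in real dimension two. Everything else is bookkeeping once the displayed formula is in hand.
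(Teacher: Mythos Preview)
The paper does not supply its own proof of this result: Theorem~\ref{theo:B-E} is simply quoted from \cite{Bai-Eel} (with full details also in \cite{Bai-Woo-book}), so there is nothing in the text to compare your argument against. Your outline is exactly the classical Baird--Eells argument: establish the fundamental tension-field identity
\[
\tau(\phi)=-(m-n)\,d\phi(\mu^{F})+(2-n)\,d\phi\bigl(\gradh{\ln\lambda}\bigr)
\]
for a horizontally conformal submersion, and then observe that the coefficient $(2-n)$ annihilates the dilation term when the target is a surface, leaving $\tau(\phi)$ proportional to $d\phi(\mu^{F})$. Since $d\phi$ is injective on $\H$ and $\mu^{F}\in\H$, the equivalence follows. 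The step you single out as delicate---tracking the conformal factor $\lambda$ through the horizontal part of $\trace_g\nabla d\phi$ to extract the factor $(2-n)$---is indeed where the work lies, and is precisely the computation carried out in \cite{Bai-Eel}. Your sketch is correct and faithful to the original source.
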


The following result is very useful when dealing with harmonic
morphism from Lie groups and their homogeneous quotient spaces.

\begin{proposition}\cite{Gud-4}\label{prop:lift}
Let $(M,g)$, $(\hat M,\hat g)$ and $(N,h)$ be Riemannian manifolds.
Furthermore, let $\phi:(M,g)\to (N,h)$ be
a map, $\pi:(\hat M,\hat g)\to(M,g)$ be a surjective
harmonic morphism and $\hat\phi:(\hat M,\hat g)\to(N,h)$ be the
composition  $\hat \phi=\phi\circ\pi$.  Then $\phi$ is a harmonic
morphism if and only if $\hat\phi$ is a harmonic morphism.
\end{proposition}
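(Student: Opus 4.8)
The plan is to collapse both implications onto a single composition formula for the Laplacian of a function pulled back through the harmonic morphism $\pi$. Throughout, fix a local harmonic function $f:U\to\rn$ on an open set $U\subseteq N$ with $\hat\phi^{-1}(U)$ non-empty, and set $k=f\circ\phi:\phi^{-1}(U)\to\rn$, so that $f\circ\hat\phi=k\circ\pi$ on $\pi^{-1}(\phi^{-1}(U))=\hat\phi^{-1}(U)$. By definition $\phi$ is a harmonic morphism exactly when every such $k$ is harmonic on $\phi^{-1}(U)$, while $\hat\phi$ is a harmonic morphism exactly when every such $k\circ\pi$ is harmonic on $\hat\phi^{-1}(U)$. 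The statement therefore reduces to the claim that $\Delta_M k=0$ if and only if $\Delta_{\hat M}(k\circ\pi)=0$.

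First I would record the general composition law for the tension field, specialised to a real-valued second factor: for any smooth map $\pi$ and smooth function $k$,
$$
\Delta_{\hat M}(k\circ\pi)=dk(\tau(\pi))+\trace_{\hat g}\,\nabla dk(d\pi,d\pi),
$$
where $\tau(\pi)$ is the tension field of $\pi$ and $\nabla dk$ is the Hessian of $k$. Since $\pi$ is a harmonic morphism it is in particular harmonic, so $\tau(\pi)=0$ and the first term drops out. By the Fuglede--Ishihara characterization it is also horizontally conformal; writing $\lambda$ for its dilation and choosing a local orthonormal frame of $\hat M$ adapted to the horizontal/vertical splitting, the vertical vectors are annihilated by $d\pi$ while the rescaled images $d\pi(X_a)/\lambda$ of the horizontal vectors form an orthonormal frame of $TM$ at regular points. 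Tracing the Hessian against this frame collapses the remaining term to the clean identity
$$
\Delta_{\hat M}(k\circ\pi)=\lambda^2\,(\Delta_M k)\circ\pi,
$$
valid on the open set where $\pi$ is submersive, that is, where $\lambda>0$; since both sides are continuous and agree on this dense set, the identity in fact holds everywhere.

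With this identity in hand the forward direction is immediate: if $\phi$ is a harmonic morphism then $\Delta_M k=0$, so the right-hand side vanishes and hence $\Delta_{\hat M}(k\circ\pi)=0$, whence $\hat\phi$ is a harmonic morphism. This half is just the fact that a composition of harmonic morphisms is again one, and uses neither surjectivity nor the full force of the identity. For the converse, suppose $\hat\phi$ is a harmonic morphism, so the left-hand side vanishes identically. Because $\pi$ is surjective it is non-constant, and by the regularity theory for harmonic morphisms its critical set is nowhere dense, so $\lambda>0$ on an open dense subset of $\hat\phi^{-1}(U)$; there the identity forces $(\Delta_M k)\circ\pi=0$. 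The continuous function $(\Delta_M k)\circ\pi$ then vanishes on a dense set, hence on all of $\hat\phi^{-1}(U)$, and surjectivity of $\pi$ promotes this to $\Delta_M k\equiv 0$ on $\phi^{-1}(U)$. Thus $\phi$ is a harmonic morphism.

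The one delicate point, and the step I expect to be the main obstacle, is the converse's treatment of the critical points of $\pi$: the informative content of the identity lives where $\lambda\neq0$, so one cannot simply divide by $\lambda^2$ pointwise over all of $\hat M$. The argument must pass through the open dense regular set and then invoke continuity of $\Delta_M k$ together with surjectivity of $\pi$ to recover the vanishing of $\Delta_M k$ across all of $\phi^{-1}(U)$ — precisely the place where the hypothesis that $\pi$ be \emph{surjective}, and not merely a harmonic morphism, is indispensable.
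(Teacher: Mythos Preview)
The paper does not supply its own proof of this proposition; it is quoted verbatim from \cite{Gud-4}, so there is no in-paper argument to compare against. Your proof is correct and is essentially the standard one: reduce to the scalar identity $\Delta_{\hat M}(k\circ\pi)=\lambda^{2}\,(\Delta_M k)\circ\pi$ via harmonicity plus horizontal conformality of $\pi$, then read off both implications. Your handling of the converse---passing to the open dense regular set, using continuity of $(\Delta_M k)\circ\pi$, and finally invoking surjectivity of $\pi$---is exactly the right way to deal with the vanishing of $\lambda$ at critical points, and your remark that surjectivity is needed only here is accurate.
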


For the general theory of harmonic morphisms, we refer to the
standard reference \cite{Bai-Woo-book} and the regularly updated
on-line bibliography \cite{Gud-bib}.

\section{The method of eigenfamilies}

Let $\phi,\psi:(M,g)\to\cn$ be functions on a
Riemannian manifold.  Then the metric $g$ induces the complex-valued
Laplacian $\tau(\phi)$ and the gradient $\nabla\phi$ with
values in the complexified tangent bundle $T^{\cn}M$ of $M$.  We
extend the metric $g$ to be complex bilinear on $T^{\cn} M$ and
define the symmetric bilinear operator $\kappa$ by
$$
\kappa(\phi,\psi)= g(\nabla\phi,\nabla\psi).
$$
Two functions $\phi,\psi: M\to\cn$ are said to be {\it orthogonal} if
$\kappa(\phi,\psi)=0$.  With this notation, the harmonicity and horizontal
conformality of $\phi:(M,g)\to\cn$ take the following form
$$
\tau(\phi)=0\ \ \text{and}\ \ \kappa(\phi,\phi)=0.
$$

\begin{definition}\cite{Gud-Sak-1}\label{definition:eigenfamily}
Let $(M,g)$ be a Riemannian manifold.  Then a set
$$
\E =\{\phi_i:M\to\cn\ |\ i\in I\}
$$
of complex-valued functions is said to be an {\it eigenfamily} for
$M$ if there exist complex numbers $\lambda,\mu\in\cn$ such that for
all $\phi,\psi\in\E$
$$
\tau(\phi)=\lambda\phi\ \ \text{and}\ \ \kappa(\phi,\psi)=\mu\phi\psi.
$$
\end{definition}

The next result is a reformulation of Theorem 2.5 of \cite{Gud-Sak-1}.
It shows that an eigenfamily for a Riemannian manifold can be used
to produce a large variety of harmonic morphisms.

\begin{theorem}\label{theo:rational}
Let $(M,g)$ be a Riemannian manifold and $\E =\{\phi_1,\dots
,\phi_n\}$ be a finite eigenfamily of complex-valued functions on
$M$. If $P,Q:\cn^n\to\cn$ are linearly independent homogeneous
polynomials of the same degree then $\phi:U\to\proc 1$ with
$$
\phi=[P(\phi_1,\dots ,\phi_n),Q(\phi_1,\dots ,\phi_n)]
$$
is a non-constant harmonic morphism on the open and dense subset
$$
U=\{p\in M| \ P(\phi_1(p),\dots ,\phi_n(p))\neq 0\ \
\text{or}\ \ Q(\phi_1(p),\dots ,\phi_n(p))\neq 0\}.
$$
\end{theorem}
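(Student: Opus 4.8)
The plan is to reduce the statement to a single algebraic computation involving the operators $\tau$ and $\kappa$ applied to $P=P(\phi_1,\dots,\phi_n)$ and $Q=Q(\phi_1,\dots,\phi_n)$, and then to exploit the conformal invariance of harmonic morphisms into a surface. Since $\proc 1$ is two-dimensional and the two affine charts $[z_0:z_1]\mapsto z_0/z_1$ and $[z_0:z_1]\mapsto z_1/z_0$ carry the Fubini--Study metric into a conformal multiple of the flat metric on $\cn$, Theorem \ref{theo:B-E} lets me test the harmonic morphism property chart by chart: it suffices to show that $P/Q$ is a $\cn$-valued harmonic morphism on $\{Q\neq 0\}$ and $Q/P$ is one on $\{P\neq 0\}$, i.e. that each satisfies $\tau(\cdot)=0$ and $\kappa(\cdot,\cdot)=0$. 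As being a harmonic morphism is a local property, gluing these two charts over the open set $U$ yields the claim.

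First I would prove a propagation lemma: any homogeneous polynomial of degree $d$ in an eigenfamily is again an eigenfunction. Using the chain rules
$$\kappa(F,G)=\sum_{i,j}\frac{\partial F}{\partial\phi_i}\frac{\partial G}{\partial\phi_j}\,\kappa(\phi_i,\phi_j),\qquad \tau(F)=\sum_i\frac{\partial F}{\partial\phi_i}\,\tau(\phi_i)+\sum_{i,j}\frac{\partial^2 F}{\partial\phi_i\partial\phi_j}\,\kappa(\phi_i,\phi_j),$$
together with the eigenfamily relations $\tau(\phi_i)=\lambda\phi_i$, $\kappa(\phi_i,\phi_j)=\mu\phi_i\phi_j$ and the Euler identities $\sum_i\phi_i\,\partial F/\partial\phi_i=dF$ and $\sum_{i,j}\phi_i\phi_j\,\partial^2 F/\partial\phi_i\partial\phi_j=d(d-1)F$, I obtain $\tau(F)=(\lambda d+\mu d(d-1))F$ and $\kappa(F,G)=\mu d^2\,FG$ for any two such $F,G$ of the same degree $d$. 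Applied to $P$ and $Q$ this shows that $\{P,Q\}$ is itself an eigenfamily, with common parameters $\Lambda=\lambda d+\mu d(d-1)$ and $\mu'=\mu d^2$; note that it is precisely the hypothesis of equal degree that forces the common Laplace eigenvalue $\Lambda$ and the proportionality $\kappa(P,Q)=\mu'PQ$.

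Next I would feed this two-element eigenfamily into the quotient computation. Writing $w=P/Q$, the quotient rule $\nabla w=(Q\nabla P-P\nabla Q)/Q^2$ gives $\kappa(w,w)=Q^{-4}\bigl(Q^2\kappa(P,P)-2PQ\,\kappa(P,Q)+P^2\kappa(Q,Q)\bigr)$, and substituting the eigenfamily values $\mu'P^2$, $\mu'PQ$, $\mu'Q^2$ collapses the bracket to $\mu'(P^2Q^2-2P^2Q^2+P^2Q^2)=0$, so $w$ is horizontally conformal. For harmonicity I would apply the composition formula for $\tau$ to $w=F(P,Q)$ with $F(x,y)=x/y$: the first-order terms $Q^{-1}\tau(P)-PQ^{-2}\tau(Q)$ cancel because $\tau(P)=\Lambda P$ and $\tau(Q)=\Lambda Q$ share the eigenvalue, while the second-order terms, using $\kappa(P,Q)=\mu'PQ$ and $\kappa(Q,Q)=\mu'Q^2$, cancel as well, giving $\tau(w)=0$. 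The same computation with the roles of $P$ and $Q$ exchanged handles the chart $\{P\neq 0\}$.

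Finally, for non-constancy: were $\phi$ constant, equal to some $[a:b]\in\proc 1$, then $bP-aQ$ would vanish identically on $U$, and hence on $M$ by continuity; since $P$ and $Q$ are linearly independent this is impossible, so $\phi$ is non-constant. I expect the genuine work to lie in the propagation lemma and the two cancellation computations, whose success hinges entirely on the equal-degree hypothesis; the chart-wise conformal reduction via Theorem \ref{theo:B-E} is then routine, and non-constancy is the single place where the linear independence hypothesis (rather than equal degree) actually enters.
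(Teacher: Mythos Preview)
The paper does not supply its own proof of this theorem; it is stated as a reformulation of Theorem~2.5 of \cite{Gud-Sak-1} and quoted without argument. So there is no in-paper proof to compare against. Your approach --- reducing to the affine charts of $\proc 1$ via conformal invariance in the target, proving that degree-$d$ homogeneous polynomials in an eigenfamily again form an eigenfamily with parameters $\Lambda=\lambda d+\mu d(d-1)$ and $\mu'=\mu d^2$, and then checking $\tau(P/Q)=0$ and $\kappa(P/Q,P/Q)=0$ by direct cancellation --- is correct and is essentially the argument behind the cited result. The propagation lemma (via the two Euler identities) and both cancellation computations are right as written.

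Two small points are worth flagging. First, you do not address why $U$ is dense; the usual justification is that $P(\phi_1,\dots,\phi_n)$ and $Q(\phi_1,\dots,\phi_n)$ are themselves Laplace eigenfunctions by your propagation lemma, so unique continuation makes their common zero set nowhere dense unless one of them vanishes identically. Second, your non-constancy step has a genuine gap: linear independence of $P$ and $Q$ \emph{as polynomials on $\cn^n$} does not by itself rule out $bP(\phi_1,\dots,\phi_n)-aQ(\phi_1,\dots,\phi_n)\equiv 0$ on $M$ --- take for instance $\phi_1=\phi_2$ and $P=x_1$, $Q=x_2$. This is really an imprecision in the statement as reformulated here (the intended hypothesis, and the one actually used in the applications later in the paper, is linear independence of the composed functions), not a defect in your method.
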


\section{Complete Minimal Submanifolds}\label{section-submanifolds}

Let $(M,g)$ be a Riemannian manifold and $\E =\{\phi_1,\dots,\phi_n\}$
be a finite eigenfamily of complex-valued functions on
$M$. Let $P,Q:\cn^n\to\cn$ be linearly independent homogeneous
polynomials of the same degree. For each non-zero
$\hat\xi=(\alpha,\beta)\in\cn^2$ and $\xi=[\alpha,\beta]\in\proc 1$
we define $M_\xi$  by
$$
M_\xi=\{p\in M|\ \beta\cdot P(\phi_1(p),\dots ,\phi_n(p))
-\alpha\cdot Q(\phi_1(p),\dots ,\phi_n(p))=0\}.
$$
Then it is clear that $M_\xi$ is a complete subset of
$$
M=\bigcup_{\xi\in\proc 1}M_{\xi}
$$
and if $\xi_1,\xi_2\in\proc 1$ are different then
$M_{\xi_1}\cap M_{\xi_2}=\Z$ where
$$
\Z=\{p\in M|\ P(\phi_1(p),\dots ,\phi_n(p))=0\
\text{and}\ Q(\phi_1(p),\dots ,\phi_n(p))=0\}.
$$
For the above situation we define the map $\Psi_{\hat\xi}:M\to\cn$ by
$$
\Psi_{\hat\xi}(p)=\beta\cdot P(\phi_1(p),\dots ,\phi_n(p))
-\alpha\cdot Q(\phi_1(p),\dots ,\phi_n(p)).
$$
The implicit function theorem tells us that if $0\in\cn$ is a
regular value of $\Psi_{\hat\xi}$ then the inverse image
$M_\xi=\Psi_{\hat\xi}^{-1}(\{0\})$ is a submanifold of $M$ of codimension two.
In that case it follows from Theorems \ref{theo:rational} and
\ref{theo:B-E} that the open and dense subset $M_\xi\setminus\Z$
of $M_\xi$ is minimal in $M$.  Hence $M_\xi$ is a complete minimal
submanifold in $M$.  This gives us an attractive method for producing
complete minimal submanifolds of Riemannian manifolds.
In Section \ref{section-example-min-submanifolds}, we apply this to the special
unitary groups $\SU n$, after elaborating on the needed
harmonic morphisms.

\section{Some useful Representation Theory}

Let $G$ be a compact Lie group equipped with a bi-invariant inner
product. Let $\g$ be the Lie algebra of $G$ and fix a maximal torus $T$ in
$G$ with Lie algebra $\t$. Denote by $\Lambda_W$ the weight lattice in
$(\t^\cn)^*$ and fix a dominant Weyl chamber $\W\subset(\t^\cn)^*$; for any
$\lambda\in\overline\W\cap\Lambda_W$, denote by $V_\lambda$ the irreducible
representation of $\g^\cn$ with highest weight $\lambda$. Recall that this
representation lifts to an irreducible representation of $G$ if and only
if $\lambda$ is analytically integral.

According to the Peter-Weyl theorem we have an orthogonal decomposition
$$
L^2(G)=\bigoplus_{\lambda}M(V_\lambda),
$$
where the sum is taken over all analytically integral dominant weights of
$G$ and $M(V_\lambda)$ denotes the space spanned by the matrix elements of
the representation.

We fix a $G$-invariant Hermitian inner product on $V_\lambda$. When the
representation is of real (quaternionic) type we also fix an invariant
symmetric (skew-symmetric) bi-linear form on $V_\lambda$. Consider a
function $\phi:G\to\cn$ of the form
$$
\phi(g)=q(ga,b)\qquad(a,b\in V_\lambda),
$$
where $q$ is any non-degenerate invariant bi-linear or Hermitian form on
$V_\lambda$. Such a function is an eigenfunction of the Laplacian on $G$ since
$$
\tau(\phi(g))=\sum_{X\in\B}q(gX^2a,b)=q(gCa,b),
$$
where $\B$ is an orthonormal basis for $\g$ and
$$
C=\sum_{X\in\B}X^2
$$
is the Casimir element in the universal enveloping algebra of $\g^\cn$.
As the representation is irreducible, $C$ acts on $V_\lambda$ as the scalar
$$
\alpha_\lambda=-(|\lambda|^2+2\ip{\lambda}{\delta}),
$$
where $\delta$ is half the sum of the positive roots, see Proposition 5.28
of \cite{Kna}. Hence
$$
\tau(\phi)=\alpha_\lambda\phi.
$$
In particular, all the functions in $M(V_\lambda)$ are eigenfunctions of the
Laplacian, all with the same eigenvalue $\alpha_\lambda$.

Concerning the $\kappa$-operator, let $\phi,\psi:G\to\cn$
be given by $\phi(g)=q(ga,b)$,
$\psi(g)=q(gu,v)$ where $a,b,u,v\in V_\lambda$.  Then
$$
\kappa(\phi(g),\psi(g))=\sum_{X\in\B}q(gXa,b)q(gXu,v).
$$
Hence we have a good reason to consider the map
$$
Q(a,b,c,d)=\sum_{X\in\B}q(Xa,b)q(Xc,d).
$$
This map is clearly $G$-invariant and its interpretation depends on the
type of the form $q$ on $V_\lambda$:
\begin{enumerate}
\item If $q=\ip{\cdot}{\cdot}$ is a Hermitian form, then $Q$ may be thought
of as a self-adjoint $G$-equivariant endomorphism on
$V_\lambda\otimes V_\lambda$ given by
$$
\ip{Q(a\otimes c)}{b\otimes d}=Q(a,b,c,d).
$$
Alternatively, we can think of $Q$ as a $G$-equivariant endomorphism on
$V_\lambda\otimes V_\lambda^*\cong\End(V_\lambda)$, defined by
$$
\ip{Q(a\otimes b)}{c\otimes d}=Q(a,b,c,d).
$$
\item If $q=\iq{\cdot}{\cdot}$ is a symmetric bi-linear form (in which case
the representation is of \emph{real type}), then $Q$ may be thought of as a
self-adjoint $G$-equivariant endomorphism on $\Lambda^2V_\lambda$ given by
$$
\iq{Q(a\wedge b)}{c\wedge d}=Q(a,b,c,d).
$$
\end{enumerate}
Let $W$ denote either $V_\lambda\otimes V_\lambda$, $V_\lambda\otimes V_\lambda^*$
or $\Lambda^2V_\lambda$. For any irreducible subrepresentation $W'$ of $W$, $Q$
restricts to a $G$-equivariant endomorphism on $W'$, which, by Schur's Lemma,
must be a multiple of the identity endomorphism on $W'$. Hence there is a
scalar $\mu$, such that
$Q=\mu q$ on $W'$.

\section{The standard representation $\cn^n$ of $\SU n$}
\label{section-SUn}

Consider the standard representation $\cn^n$ of $\SU n$, equipped
with the standard Hermitian inner product $\ip{\cdot}{\cdot}$, and $Q$
as a self-adjoint map $$Q:\cn^n\otimes\cn^n\to\cn^n\otimes\cn^n.$$  Since
$$
\cn^n\otimes\cn^n=\Sym^2(\cn^n)\oplus\Lambda^2\cn^n
$$
and $\Sym^2(\cn^n)$ is irreducible, we restrict $Q$ to $\Sym^2(\cn^n)$.
If $a,b\in\mathbb{C}^n$, we denote by $a\cdot b$ the image of
$a\otimes b$ in $\Sym^2(\cn^n)$.

Following Schur's Lemma this is a scalar multiple $\mu$ of the identity i.e.
$$
\ip{Q(a\cdot b)}{c\cdot d}=\mu\ip{a\cdot b}{c\cdot d}
=\mu(\ip{a}{c}\ip{b}{d}+\ip{a}{d}\ip{b}{c}).
$$
In particular, we have
\begin{equation}\label{eq:Q-and-mu}
\ip{Q(a\cdot a)}{c\cdot d}=2\mu\ip{a}{c}\ip{a}{d}.
\end{equation}

For $z\in\SU n$, we let $\phi_c(z)=\ip{za}{c}$. As a direct consequence
of (\ref{eq:Q-and-mu}) we see that the $\kappa$-operator satisfies
\begin{eqnarray*}
\kappa (\phi_c(z),\phi_d(z))
&=&\sum_{X\in\B}\ip{zXa}{c}\ip{zXa}{d}\\
&=&\sum_{X\in\B}\ip{Xa}{z^{-1}c}\ip{Xa}{z^{-1}d}\\
&=&\ip{Q(a\cdot a)}{z^{-1}c\cdot z^{-1}d}\\
&=&\mu\ip{a\cdot a}{z^{-1}c\cdot z^{-1}d}\\
&=&\mu(\ip{a}{z^{-1}c}\ip{a}{z^{-1}d}+\ip{a}{z^{-1}c}\ip{a}{z^{-1}d})\\
&=&2\mu\ip{za}{c}\ip{za}{d}\\
&=&2\mu\ \phi_c(z)\phi_d(z).
\end{eqnarray*}
These calculations show that for any fixed non-zero element $a\in\cn^n$
the set
$$\E_a=\{\phi_c:\SU n\to\cn\ |\ \phi_c(z)=\ip{za}{c},\ c\in\cn^n\}$$
is an eigenfamily on $\SU n$. This was already constructed in
Theorem 5.2 of \cite{Gud-Sak-1} using a different approach.
The stabilizer subgroup of $\SU n$ fixing the element $a$ is isomorphic
to $\SU {n-1}$ so $\E_a$ induces an eigenfamily on the odd-dimensional
sphere $$S^{2n-1}=\SU n/\SU{n-1}.$$  The induced local harmonic morphisms
live on the complex projective space
$$\cn P^{n-1}=\SU n/\text{\bf S}(\U 1\times\U{n-1}).$$
They are clearly holomorphic with respect to the standard K\" ahler
structure.

\begin{example}
Any non-zero element $a\in\cn^2$ induces the following
eigenfamily of complex valued functions
$$\E_a=\{\phi_c:\SU 2\to\cn\ |\ \phi_c(z)=\ip{za}{c},\ c\in\cn^2\}.$$
For linearly independent $c,d\in\cn^2$ and non-zero
$\hat\xi=(\alpha,\beta)\in\cn^2$
define $\Psi_{\hat\xi}:\SU 2\to\cn$ by
$$\Psi_{\hat\xi}:z\mapsto \beta\ip{za}{c}-\alpha\ip{za}{d}.$$
Then
$\Z=\{z\in\SU 2|\ \ip {za}c= 0\ \ \text{and}\ \ \ip{za}d= 0\}$
is empty so we have a globally defined harmonic morphism
$\phi:\SU 2\to\proc 1$ given by
$$\phi:z\mapsto [\ip{za}c,\ip{za}d].$$
The fibres of this map are the well-known compact Hopf circles in $\SU 2$.
\end{example}

\section{The standard representation $\cn^n$ of $\SO n$}
\label{section-SOn}

Consider the standard representation $\cn^n$ of $\SO n$, equipped
with the standard bi-linear form $\iq{\cdot}{\cdot}$, and $Q$ as
a self-adjoint map $$\cn^n\otimes\cn^n\to\cn^n\otimes\cn^n.$$ Since
$$
\cn^n\otimes\cn^n=\Sym^2(\cn^n)\oplus\Lambda^2\cn^n
$$
and $\Lambda^2\cn^n$ is irreducible, we restrict $Q$ to $\Lambda^2\cn^n$.
Following Schur's lemma this is a scalar multiple $\mu$ of the identity i.e.
$$
\iq{Q(a\wedge b)}{c\wedge d}=\mu\iq{a\wedge b}{c\wedge d}
=\mu(\iq{a}{c}\iq{b}{d}-\iq{a}{d}\iq{b}{c}).
$$
In particular, we have
$$
\iq{Q(a\wedge b)}{a\wedge d}=\mu(\iq{a}{a}\iq{b}{d}-\iq{a}{d}\iq{b}{a}).
$$
For a fixed isotropic element $a\in\cn^n$ we now see that the
$\kappa$-operator satisfies
\begin{eqnarray*}
\kappa (\phi_b(x),\phi_d(x))
&=&\sum_{X}\iq{xXa}{b}\iq{xXa}{d}\\
&=&\sum_{X}\iq{Xa}{x^{-1}b}\iq{Xa}{x^{-1}d}\\
&=&\iq{Q(a\wedge x^{-1}b)}{a\wedge x^{-1}d}\\
&=&\mu\iq{a\wedge x^{-1}b}{a\wedge x^{-1}d}\\
&=&\mu (\iq aa\iq {x^{-1}b}{x^{-1}d} - \iq a{x^{-1}b}\iq a{x^{-1}d})\\
&=&-\mu\ \phi_b(x)\phi_d(x).
\end{eqnarray*}
This shows that for a fixed isotropic element $a\in\cn^n$ the following
set of complex-valued functions is an eigenfamily on $\SO n$
$$\E_a=\{\phi_b:\SO n\to\cn\ |\ \phi_b(x)=\iq{xa}{b},\ b\in\cn^n\}.$$
These are exactly those constructed in Theorem 4.3 of \cite{Gud-Sak-1}.
The stabilizer subgroup of $\SO n$ fixing the isotropic $a\in\cn^n$
is isomorphic to
$\SO{n-2}$ so the induced local harmonic morphisms live
on the complex quadric $$\Q_{n-2}=\SO n/(\SO 2\times\SO{n-2}).$$
These maps are holomorphic with respect to the standard complex
structure on $\Q_{n-2}$ induced by the holomorphic embedding
$\Q_{n-2}\hookrightarrow\cn P^{n-1}$.

\section{The standard representation $\cn^{2n}$ of $\Sp n$}
\label{section-Spn}

Consider the standard representation $\cn^{2n}$ of $\Sp n$ equipped
with the standard Hermitian inner product $\ip{\cdot}{\cdot}$.
The form $Q$ defines a self-adjoint map
$Q:\cn^{2n}\otimes\cn^{2n}\to\cn^{2n}\otimes\cn^{2n}$ by
$$
\ip{Q(a\otimes c)}{b\otimes d}=Q(a,b,c,d)
$$
Since
$$
\cn^{2n}\otimes\cn^{2n}=\Sym^2(\cn^{2n})\oplus\Lambda^2\cn^{2n}
$$
and $\Sym^2\cn^{2n}$ is irreducible, we consider the restriction
of $Q$ to $\Sym^2\cn^{2n}$.  Following Schur's lemma this is a scalar
multiple $\mu$  of the identity i.e.
$$
\ip{Q(a\cdot c)}{b\cdot d}=\mu\ip{a\cdot c}{b\cdot d}
=\mu(\ip{a}{b}\ip{c}{d}+\ip{a}{d}\ip{c}{b}).
$$
In particular, we have
$$
Q(a,b,a,d)=\ip{Q(a^2)}{b\cdot d}=2\mu\ip{a}{b}\ip{a}{d}.
$$
As a direct consequence we see that the $\kappa$-operator satisfies
\begin{eqnarray*}
\kappa (\phi_c(q),\phi_d(q))
&=&\sum_{X\in\B}\ip{qXa}{c}\ip{qXa}{d}\\
&=&\sum_{X\in\B}\ip{Xa}{q^{-1}c}\ip{Xa}{q^{-1}d}\\
&=&Q(a,q^{-1}c,a,q^{-1}d)\\
&=&2\mu\ip{a}{q^{-1}c}\ip{a}{q^{-1}d}\\
&=&2\mu\ \phi_c(q)\phi_d(q).
\end{eqnarray*}
This shows that for any fixed element $a\in\cn^{2n}$ the following set
of complex-valued funtions is an eigenfamily on $\Sp n$
$$
\E_a=\{\phi_c:\Sp n\to\cn\ |\ \phi_c(q)=\ip{qa}{c},\ c\in\cn^{2n}\}.
$$
These are exactly those constructed in Theorem 6.2 in \cite{Gud-Sak-1}.
For a given element $a\in\cn^{2n}$ the stabilizer subgroup of $\Sp n$
fixing $a$ is isomorphic to $\Sp{n-1}$ so the induced local
harmonic morphisms live on the sphere $$S^{4n-1}=\Sp n/\Sp{n-1}.$$

\section{The dual representation $(\cn^n)^*$ of $\SU n$}

To the standard representation $\cn^n$ of $\SU n$ we have the
dual representation $(\cn ^n)^*$ given by
$$
\cn^n\ni b\mapsto\ip{\cdot}{b}\in (\cn^n)^*.
$$
A calculation, similar to that above, shows that for any fixed
non-zero element $a\in\cn^n$ the set
$$\E^*_a=\{\phi_c:\SU n\to\cn\ |\ \phi_c(z)=\ip{c}{za},\ c\in\cn^n\}$$
is an eigenfamily on $\SU n$. The stabilizer subgroup of $\SU n$ fixing
$a$ is isomorphic to $\SU {n-1}$ so $\E_a^*$ induces an
eigenfamily on $S^{2n-1}$  and the induced local harmonic morphisms
live on $\cn P^{n-1}$.  They are clearly anti-holomorphic with respect
to the standard K\" ahler structure.

\section{The representation $\slc n$ of $\SU n$}
\label{section-slcn}

As representations of $\SU n$, we can identify the tensor product
$\cn^n\otimes(\cn^n)^*$ with $\Hom(\cn^n,\cn^n)$ by declaring
$a\otimes b$ to correspond to the linear map
$$
\cn^n\ni v\mapsto \ip{v}{b}a\in\cn^n.
$$
The representation $\cn^n\otimes(\cn^n)^*$ of $\SU n$ corresponds
then to the standard adjoint representation of $\SU n$ on
$\Hom(\cn^n,\cn^n)$ i.e. $z\cdot A=zAz^{-1}$. The standard invariant
inner product
$$
\ip{A}{B}=\trace(A\cdot B^*)
$$
on $\Hom(\cn^n,\cn^n)$ then translates to the product
$$
\ip{a\otimes b}{c\otimes d}=\ip{a}{c}\ip{d}{b}
$$
on $\cn^n\otimes(\cn^n)^*$ and they are obviously invariant under $\SU n$.
Furthermore,
$$
(a\otimes b)^*=b\otimes a.
$$

The representation $\Hom(\cn^n,\cn^n)$ of $\SU n$ decomposes into
irreducible subrepresentations
$$
\Hom(\cn^n,\cn^n)=\span\{I\}\oplus\slc n
$$
where $\slc n$ are the trace-free endomorphisms.
For $A,B\in\slc n$ we define
$$
\phi(z)=\ip{z\cdot A}{B}=\ip{zAz^{-1}}{B}\qquad(z\in\SU n).
$$
Since the representation is irreducible, we know that $\phi$
is an eigenfunction of the Laplacian.

To study the $\kappa$-operator, we fix an orthonormal basis $\B$
of $\su n$ and consider the map
$$
Q(A,B,C,D)=\sum_{X\in B}\ip{[X,A]}{B}\ip{[X,C]}{D}\qquad (A,B,C,D\in\slc n).
$$
If $\phi(z)=\ip{z\cdot A}{B}$ and $\psi(z)=\ip{z\cdot C}{D}$, then we have
$$
\kappa(\phi(z),\psi(z))=Q(z\cdot A,B,z\cdot C,D).
$$
Now, it follows easily that
$$
\ip{[A,B]}{C}=\ip{A}{[C,B^*]}.
$$
Furthermore, since $\slc n$ is the complexification of $\su n$, $\B$
is a Hermitian basis of $\slc n$. Hence
\begin{eqnarray*}
Q(A,B,C,D)&=&\sum_{X\in\B}\ip{[X,A]}{B}\ip{[X,C]}{D}\\
&=&\sum_{X\in\B}\ip{X}{[B,A^*]}\ip{X}{[D,C^*]}\\
&=&\sum_{X\in\B}\ip{X}{[B,A^*]}\ip{[D^*,C]}{X}\\
&=&\ip{[D^*,C]}{[B,A^*]}.
\end{eqnarray*}
Let us now write
$A=a\otimes b,\ B=c\otimes d,\ C=e\otimes f,\ D=g\otimes h$
and assume that
$a\perp b,\ e\perp f$,
which also ensures that $A,C$ are trace-free. Then
\begin{eqnarray*}
\ip{[D^*,C]}{[B,A^*]}&=&\ip{[h\otimes g,e\otimes f]}
{[c\otimes d,b\otimes a]}\\
&=&\ip{\ip{e}{g}h\otimes f-\ip{h}{f}e\otimes g}
{\ip{b}{d}c\otimes a-\ip{c}{a}b\otimes d}\\
&=&\ip{e}{g}\ip{d}{b}\ip{h}{c}\ip{a}{f}
-\ip{e}{g}\ip{a}{c}\ip{h}{b}\ip{d}{f}\\
& &-\ip{h}{f}\ip{d}{b}\ip{e}{c}\ip{a}{g}
+\ip{h}{f}\ip{a}{c}\ip{e}{b}\ip{d}{g}.
\end{eqnarray*}
If we now assume that $a,b,e,f$ are all mutually orthogonal,
then this reduces to
\begin{eqnarray*}
\ip{[D^*,C]}{[B,A^*]}&=&
-\ip{e}{g}\ip{a}{c}\ip{h}{b}\ip{d}{f}\\
& &-\ip{h}{f}\ip{d}{b}\ip{e}{c}\ip{a}{g}.
\end{eqnarray*}

Further $A=C$ i.e. $a=e$, $b=f$ give
$Q(A,B,A,D)=-2\ip{A}{B}\ip{A}{D}$, hence
$$
\kappa(\phi(z),\psi(z))=Q(z\cdot A,B,z\cdot A,D)=-2\ \phi(z)\psi(z).
$$
In fact, we see that by fixing $a,b\in\cn^n$ with $a\perp b$, then
$$
\{\ip{z\cdot(a\otimes b)}{c\otimes d}\ |\ c,d\in\cn^n\}
=\{\ip{z\cdot a}{c}\ip{d}{z\cdot b}\ |\ c,d\in\cn^n\}
$$
is an eigenfamily.  As a direct consequence we have the following
far going generalization of Example 2.3 of \cite{Gud-4}.

\begin{example}\label{exam-tensor}
Equip $\cn^n$ with its standard Hermitian scalar product and let
$\{ e_1,e_2, \dots ,e_n\}$ be some orthonormal basis.
Then define the complex-valued functions $\phi_{ik}:\SU n\to\cn$ by
$$\phi_{ik}(z)=\ip{z\cdot e_1}{e_i}\ip{e_k}{z\cdot e_2}
=\ip{z_1}{e_i}\ip{e_k}{z_2}=z_{i1}\bar z_{k2}.$$
It then follows from the above calculations that these functions
generate the following complex $n^2$-dimensional eigenfamily on $\SU n$
$$\E=\{\phi_A:\SU n\to\cn\,|\ \phi_A(z)=z_1^tA\bar z_2,\ A\in\cn^{n\times n}\}.$$
Let $K\cong\text{\bf S}(D\times\U{n-2})$ be the stabilizer
subgroup of $\SU n$ fixing $e_1\otimes e_2^*$.  Here $D\cong\U 1$
denotes the diagonal of $\U 2$.   The set $\E$ induces an
eigenfamily on the homogeneous quotient space
$\SU n/\text{\bf S}(D\times\U{n-2})$ and the harmonic morphisms,
induced by this, actually live on the flagmanifold
$$F=\SU n/\text{\bf S}(\U 1\times\U 1\times\U{n-2}).$$
They are not holomorphic with respect to the standard K\" ahler
structure on $F$ induced by the standard K\" ahler structure on
the complex Grassmannian of $2$-planes in $\cn^n$ via the
homogeneous projection map
$$F\to\SU n/\text{\bf S}(\U 2\times\U{n-2}).$$
\end{example}

Here we have an alternative proof of the construction in
Example \ref{exam-tensor}, in the spirit of the \cite{Gud-Sak-1}.

\begin{proof}
Let $\{ e_1,e_2, \dots ,e_n\}$ be some orthonormal basis for $\cn^n$
and define the complex-valued functions $\phi_{i},\psi_{k}:\SU n\to\cn$
on the special unitary group by
$$  \phi_{i}(z)=\ip{z\cdot e_1}{e_i}=z_{i1},
\ \ \psi_{k}(z)=\ip{e_k}{z\cdot e_2}=\bar z_{k2}.$$
It then follows from above that
$$\E_1=\{\phi_{i} |\ i=1,2,\dots,n\}\ \ \text{and}\ \
\E_2=\{\psi_{k} |\ k=1,2,\dots,n\}$$
are two eigenfamilies and a simple calculation shows that
$\kappa(\phi_i,\psi_k)=0$.
As a direct consequence of Lemma A.1 in \cite{Gud-Sak-1} we see that
$$\E=\{\phi_{i}\psi_{k}|\ i,k=1,2,\dots,n\}$$ is also an
eigenfamily of $\SU n$.
\end{proof}

With the following example we now generalize further.

\begin{example}\label{exam-tensor-extended}
Equip $\cn^n$ with its standard Hermitian scalar product and let
$\{ e_1,e_2, \dots ,e_n\}$ be some orthonormal basis.  Then
define the complex-valued functions $\phi_{ijkl}:\SU n\to\cn$ by
$$\phi_{ijkl}(z)=\ip{z\cdot e_j}{e_i}\ip{e_k}{z\cdot e_l}
=z_{ij}\bar z_{kl},\ \ \text{where}\ \  i,j,k,l=1,\dots,n.$$
It then follows from the above calculations that, for each
$s\in\zn^+$ with $2s\le n$, we have an eigenfamily $\E_{s}$
on $\SU n$ given by
$$\E_{s}=\{\,  \sum_{r=1}^sz_{2r-1}^tA_r\bar z_{2r}\, |
\ A_r\in\cn^{n\times n}\ \text{and}\ r=1,2,\dots,s\},$$
which clearly is a complex vector space of dimension $n^{2s}$.  Let
$$K\cong\text{\bf S}(D\times\dots\times D\times\U{n-2s})$$
be the stabilizer subgroup of $\SU n$
fixing $$e_1\otimes e_2^*, \dots, e_{2s-1}\otimes e_{2s}^*.$$
Here $D\cong\U 1$ denotes the diagonal of $\U 2$.
The set $\E_{s}$ induces an eigenfamily on the homogeneous quotient space
$\SU n/\text{\bf S}(D\times\U{n-2})$ and the harmonic morphisms,
induced by this, actually live on the flagmanifold
$$\SU n/\text{\bf S}(\U 1\times\cdots\times\U 1\times\U{n-2s}).$$
\end{example}

\section{Compact minimal submanifolds in $\SU n$}
\label{section-example-min-submanifolds}
In this section we use the previous constructions to produce
smooth closed minimal submanifolds of $\SU n$.
If $z$ is a matrix in $\SU n$ then we denote its columns
by $z_1,\dots ,z_n$ and write $z=(z_1,\dots ,z_n)$.
\begin{theorem}\label{theo-examples}
Let $H$ be an $n\times n$ complex matrix which has $n$
different eigenvalues. Then the compact subset
$$
M=\{(z_1,\dots,z_n)\in\SU n |\ z_1^tH\bar z_2=0\}
$$
of the special unitary group is a minimal submanifold of
codimension two.
\end{theorem}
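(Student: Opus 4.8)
The plan is to exhibit $M$ as one of the level sets constructed in Section~\ref{section-submanifolds}. The function $\phi_H\colon\SU n\to\cn$, $\phi_H(z)=z_1^tH\bar z_2$, is precisely the member of the eigenfamily $\E$ of Example~\ref{exam-tensor} attached to the matrix $A=H$. Fix $p\in M$ and pick $A_0\in\cn^{n\times n}$ with $\phi_{A_0}(p)=p_1^tA_0\bar p_2\neq0$; this is possible because $p_1$ and $\bar p_2$ are nonzero columns of a unitary matrix, so $A_0\mapsto p_1^tA_0\bar p_2$ is a nonzero linear functional on $\cn^{n\times n}$. Since $\phi_H(p)=0\neq\phi_{A_0}(p)$, the functions $P=\phi_H$ and $Q=\phi_{A_0}$ (homogeneous of degree one) are linearly independent, and with $\hat\xi=(0,1)$ the construction of Section~\ref{section-submanifolds} gives $\Psi_{\hat\xi}=\phi_H$, so that $M=\phi_H^{-1}(\{0\})=M_\xi$. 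By Theorems~\ref{theo:rational} and \ref{theo:B-E}, once $0$ is known to be a regular value of $\phi_H$ it follows that $M$ is a submanifold of codimension two which, near $p$, is a fibre of the harmonic morphism $[\phi_H,\phi_{A_0}]$ and is therefore minimal near $p$. As $p\in M$ was arbitrary, $M$ is a minimal submanifold of codimension two, and it is compact since $\SU n$ is. Everything thus reduces to the assertion that $0$ is a regular value of $\phi_H$, that is, $d(\phi_H)_z$ is onto for every $z\in M$.

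To check this I would differentiate $\phi_H$ along $t\mapsto ze^{tX}$ with $X\in\su n$. Using $z_1^t=e_1^tz^t$, $\bar z_2=\bar ze_2$, the identity $(z^t)^{-1}=\bar z$, the relation $X^t=-\bar X$ valid on $\su n$, and the fact that complex conjugation maps $\su n$ onto itself, one rewrites the image of $d(\phi_H)_z$ as the set $\{\trace(Y\hat K):Y\in\su n\}\subset\cn$, where $\hat H:=z^tH\bar z=z^tH(z^t)^{-1}$ is similar to $H$ (hence still has $n$ distinct eigenvalues) and $\hat K:=[\hat H,e_2e_1^*]$. Since $\trace\hat K=0$, a routine computation with the Hermitian and anti-Hermitian parts of $\hat K$ shows that this image is all of $\cn$ unless $\mu\hat K=\bar\mu\hat K^*$ for some $\mu\in\cn\setminus\{0\}$. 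Finally, $z\in M$ is equivalent to $(\hat H)_{12}=e_1^*\hat He_2=0$, and hence also $e_2^*\hat H^*e_1=0$.

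The key step, and the one I expect to be the main obstacle, is to show that $\mu\hat K=\bar\mu\hat K^*$ with $\mu\neq0$ cannot hold on $M$. Writing $\hat K=(\hat He_2)e_1^*-e_2(\hat H^*e_1)^*$ and evaluating this identity on the vectors $e_1$ and $e_2$, the orthogonality $e_1\perp e_2$ together with $e_1^*\hat He_2=0$ and $e_2^*\hat H^*e_1=0$ annihilates most terms and collapses the identity to $\hat He_2=\lambda e_2$ and $e_1^*\hat H=\lambda e_1^*$, where $\lambda=e_1^*\hat He_1$. Thus $e_2$ would be a right eigenvector and $e_1$ a left eigenvector of $\hat H$ for the same eigenvalue $\lambda$, which is simple because $\hat H$ has $n$ distinct eigenvalues; but for a simple eigenvalue the left and right eigenlines pair non-degenerately, so $e_1^*e_2\neq0$, contradicting $e_1\perp e_2$. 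Hence no such $\mu$ exists, $d(\phi_H)_z$ is onto at every $z\in M$, and the theorem follows. This is exactly where the hypothesis on the eigenvalues of $H$ enters: if two of them coincided, one could choose orthogonal left and right eigenvectors and $0$ would no longer be a regular value of $\phi_H$.
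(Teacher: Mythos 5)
Your proposal is correct, and its minimality half follows the paper's own route: both arguments realize $M$ (up to a dense or local piece) as a fibre of a harmonic morphism built from the eigenfamily of Example \ref{exam-tensor} via Theorems \ref{theo:rational} and \ref{theo:B-E} — the paper writes $H=A-B$ and takes the fibre over $[1,1]$ of $z\mapsto[z_1^tA\bar z_2,z_1^tB\bar z_2]$, while you localize at each $p\in M$ with an auxiliary $A_0$ and the fibre over $[0,1]$ of $[\phi_H,\phi_{A_0}]$; these are interchangeable. Where you genuinely diverge is the smoothness/regularity step. The paper shows the complex gradient of $\Phi(z)=\ip{H}{z^{-1}(e_2\otimes e_1)}$ is non-zero by diagonalizing $H=PDP^{-1}$, noting that $z^{-1}(e_2\otimes e_1)$ is a non-zero nilpotent (hence not diagonal in the $P$-basis), pairing it with an off-diagonal $\epsilon_i\otimes\epsilon_j$ for which $[\epsilon_i\otimes\epsilon_j,D]=(\lambda_i-\lambda_j)\epsilon_i\otimes\epsilon_j\neq 0$, and transferring from $\glc n$ back to $\su n$ with the holomorphicity Lemma \ref{lemm-gradient vanishes}. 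You instead compute the real differential on $\su n$ directly, identify its image with $\{\trace(Y\hat K)\ |\ Y\in\su n\}$ where $\hat H=z^tH(z^t)^{-1}$ and $\hat K=[\hat H,e_2e_1^*]$, show (correctly — the trace-zero reduction eliminating the $i\rn I$ ambiguity is the right point) that failure of surjectivity forces $\mu\hat K=\bar\mu\hat K^*$ for some $\mu\neq 0$, and rule this out on $M$: using $\hat Ke_2=0$ and $\hat K^*e_1=0$ there, the relation would make $e_2$ a right and $e_1^*$ a left eigenvector of $\hat H$ for one and the same eigenvalue, which is simple since $\hat H$ is similar to $H$, contradicting $e_1\perp e_2$ because left and right eigenvectors of a simple eigenvalue pair non-trivially. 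Both proofs use the distinct-eigenvalue hypothesis in an essential way; yours has the extra merit of establishing outright that $d\phi_H$ has real rank two at every point of $M$, so the codimension-two regular-value statement is explicit, whereas the paper's proof literally yields only the non-vanishing of the complexified gradient and the rank-two conclusion rests on the isotropy $\kappa(\Phi,\Phi)=-2\Phi^2=0$ along $M$ implicit in the framework of Section \ref{section-submanifolds}; conversely, the paper's diagonalization argument is shorter and shows the gradient is non-zero on all of $\SU n$, not only along $M$.
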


\begin{proof}
We begin by showing that $M$ is smooth. To this effect, we let
$$\Phi(z)=z_1^tH\bar z_2=\ip{\Ad_z(H)e_1}{e_2}=\ip{\Ad_z(H)}{e_2\otimes e_1}
=\ip H{z^{-1}(e_2\otimes e_1)}.
$$
Note that if $e_2\otimes e_1$ represents the endomorphism $A$, then
$$z^{-1}(e_2\otimes e_1)=z^{-1}(e_2)\otimes z^{-1}( e_1)=z^{-1}Az.$$
This implies that the gradient of $\Phi$ satisfies
\begin{equation}\label{eq:gradient}
\nabla\Phi(z)=\sum_{X\in B}\ip{[X,H]}{z^{-1}(e_2\otimes e_1)}X,
\end{equation}
where $\B$ is any orthonormal basis for the Lie algebra $\su n$.
Our goal is thus to find a vector $X$ in $\glc n$
such that
\begin{equation}
\ip{[X,H]}{z^{-1}(e_2\otimes e_1)}\neq 0.
\end{equation}
Since $H$ has $n$ distinct eigenvalues, there exists a diagonal matrix
$D$ and a matrix $P\in \GLC n$ such that $$H=PDP^{-1}.$$
We let $\{\epsilon_1,\dots ,\epsilon_n\}$ be a basis with respect to which $D$ is
diagonal and we denote its eigenvalues by $\lambda_1,\dots ,\lambda_n$ i.e. we have
\begin{equation}\label{eq:diagonal matrix}
D=\sum_{i=1}^n \lambda_i \epsilon_i\otimes \epsilon_i.
\end{equation}
Thus, if $X\in \slc n$ then
$[X,H]=P[P^{-1}XP,D]P^{-1}$ and
\begin{eqnarray*}
\ip{[X,H]}{z^{-1}(e_2\otimes e_1)}
&=&\ip{P[P^{-1}XP,D]P^{-1}}{z^{-1}(e_2\otimes e_1)}\\
&=&\trace (P[P^{-1}XP,D]P^{-1}(z^{-1}(e_2\otimes e_1)^*))\\
&=&\trace ([P^{-1}XP,D]P^{-1}(z^{-1}(e_2\otimes e_1)^*)P)\\
&=&\ip{[P^{-1}XP,D]}{P^*z^{-1}(e_2\otimes e_1)(P^{-1})^*}\\
&=&\ip{[P^{-1}XP,D]}{P^*z^{-1}(e_2\otimes e_1)(P^*)^{-1}}.
\end{eqnarray*}
Because $e_1$ and $e_2$ are orthogonal in $\mathbb{C}^n$, $z^{-1}(e_1)$
and $z^{-1}(e_2)$ are also orthogonal, hence the endomorphisms
$z^{-1}(e_2\otimes e_1)$ and $P^*z^{-1}(e_2\otimes e_1)(P^*)^{-1}$
are nilpotent. On the other hand,
$P^*z^{-1}(e_2\otimes e_1)(P^*)^{-1}$ is not identically zero,
hence it is not diagonal and there exist $i,j$ such that $i\neq j$ and
\begin{equation}\label{eq:non diagonal}
\ip{\epsilon_i\otimes \epsilon_j}{P^*z^{-1}(e_2\otimes e_1)(P^*)^{-1}}\neq 0.
\end{equation}

We let $X_0=P(\epsilon_i\otimes \epsilon_j)P^{-1}$ and obtain
$$[X_0,D]=[\epsilon_i\otimes \epsilon_j,D]
=(\lambda_i- \lambda_j)\epsilon_i\otimes \epsilon_j.$$
Since $\lambda_i\neq\lambda_j$ it follows that
$$\ip{[X_0,H]}{z^{-1}(e_2\otimes e_1)}\neq 0.$$
Using Lemma \ref{lemm-gradient vanishes}, we derive that $\nabla\Psi$
is non-zero, hence $M$ is smooth.

To prove that $M$ is minimal, we choose two linearly independent
$A,B\in\cn^{n\times n}$ such that
$H=A-B$.  It then follows from Example \ref{exam-tensor} and
Theorem \ref{theo:rational} that the map $\phi:U\to\proc 1$ with
$$
\phi(z)=[z_1^tA\bar z_2,z_1^tB\bar z_2]
$$
is a non-constant harmonic morphism on the open and dense subset
$$
U=\{z\in\SU n| \ z_1^tA\bar z_2\neq 0\ \ \text{or}\ \ z_1^tB\bar z_2\neq 0\}.
$$
The fibre $\phi^{-1}(\{[1,1]\})$ in $\SU n$ is a dense open subset of $M$;
it is moreover minimal in $\SU n$ by Theorem \ref{theo:B-E}. It follows that $M$ is minimal.
\end{proof}

\begin{lemma}\label{lemm-gradient vanishes}
Assume that the gradient $\nabla\Psi$ vanishes at a point $z\in \SU n$. Then,
for all $X\in{\mathfrak gl}(n,\mathbb{C})$
\begin{equation}\label{eq:bracket}
\ip{[X,H]}{z^{-1}(e_2\otimes e_1)}=0.
\end{equation}
\end{lemma}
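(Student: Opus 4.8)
The plan is to read off from (\ref{eq:gradient}) that the hypothesis $\nabla\Psi(z)=0$ is equivalent to the vanishing of each coefficient appearing there, and then to promote that vanishing from the orthonormal basis $\B$ of $\su n$ to the whole of $\glc n$ by a linearity argument. (Here $\Psi$ denotes the function called $\Phi$ in (\ref{eq:gradient}), namely $z\mapsto z_1^tH\bar z_2$.)

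First I would recall that, by (\ref{eq:gradient}),
$$
\nabla\Psi(z)=\sum_{X\in\B}\ip{[X,H]}{z^{-1}(e_2\otimes e_1)}\,X,
$$
where $\B$ is a fixed orthonormal basis of $\su n$. Since the elements of $\B$ are linearly independent, $\nabla\Psi(z)=0$ forces $\ip{[X,H]}{z^{-1}(e_2\otimes e_1)}=0$ for every $X\in\B$. As noted in Section \ref{section-slcn}, this same set $\B$ is a Hermitian basis of the complexification $\slc n$, and hence spans $\slc n$ over $\cn$.

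Next I would observe that the map $L:\glc n\to\cn$ defined by $L(X)=\ip{[X,H]}{z^{-1}(e_2\otimes e_1)}$ is $\cn$-linear: the bracket $X\mapsto[X,H]$ is $\cn$-linear, and the Hermitian product $\ip{\cdot}{\cdot}$ is $\cn$-linear in its first slot, which is the slot occupied by $[X,H]$. A $\cn$-linear functional that vanishes on a $\cn$-spanning set vanishes identically, so $L$ vanishes on all of $\slc n$. Finally, using the decomposition $\glc n=\slc n\oplus\cn\,I$ together with $[cI,H]=0$ for scalar matrices $cI$, we get $L(cI)=0$ as well, so $L$ vanishes on all of $\glc n$; this is precisely (\ref{eq:bracket}).

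The argument is elementary linear algebra, and I do not expect any genuine obstacle. The two small points that need attention are the $\cn$-linearity of $L$ — it is essential that after expanding the bracket the matrix $H$ ends up in the \emph{first}, $\cn$-linear (not conjugate-linear) argument of $\ip{\cdot}{\cdot}$ — and the splitting $\glc n=\slc n\oplus\cn\,I$, which is what lets us pass from $\slc n$ to the full general linear algebra as claimed in the statement.
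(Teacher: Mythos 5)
Your proof is correct, but it takes a different route from the paper's. You handle the passage from $\su n$ to all of $\glc n$ purely algebraically: the coefficient functional $L(X)=\ip{[X,H]}{z^{-1}(e_2\otimes e_1)}$ is $\cn$-linear in $X$ (bracket with $H$ in the $\cn$-linear slot of the Hermitian product), its vanishing on the orthonormal basis $\B$ is forced by the $\cn$-linear independence of $\B$ inside the complexified tangent space where $\nabla\Psi$ lives, and since $\B$ spans $\slc n$ over $\cn$ and $[cI,H]=0$, the splitting $\glc n=\slc n\oplus\cn I$ finishes the argument. The paper instead only reads off from (\ref{eq:gradient}) the vanishing of $d\Psi(X)$ for $X\in\su n$, and then extends to $\slc n$ by observing that $\Psi$ is the restriction to $\SU n$ of a holomorphic map $\tilde\Psi$ on $\SLC n$, so that $d\tilde\Psi(X_1+iX_2)=d\Psi(X_1)+id\Psi(X_2)=0$; scalar matrices are dismissed via $[\lambda I,H]=0$ exactly as you do. The two arguments encode the same fact (complex-linearity of the differential), but yours works directly with the explicit formula and avoids introducing the holomorphic extension, which makes it somewhat more elementary; the paper's version has the advantage of not needing to inspect which slot of the Hermitian form is $\cn$-linear, since holomorphy of $\tilde\Psi$ supplies the complex-linearity wholesale. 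Both are complete proofs of (\ref{eq:bracket}).
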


\begin{proof}
First we notice that if $X=\lambda I$ for some complex number
$\lambda$, then $[X,H]=0$ and (\ref{eq:bracket}) is automatically verified.
So we can assume that $X\in{\mathfrak sl}(n,\mathbb{C})$.
If $X$ belongs to $\su n$, then equation (\ref{eq:bracket}) follows immediately
from (\ref{eq:gradient}).
To treat the general case, we notice that $\Psi$ is the restriction
to $\SU n$ of the holomorphic map
$\tilde{\Psi}:{\bf SL}(n,\mathbb{C})\longrightarrow \mathbb{C}$ with
$$z\mapsto \ip H{z^{-1}(e_2\otimes e_1)}.$$
If $X\in\mathfrak{sl}(n,\mathbb{C})$, there exist $X_1,X_2$ in $\su n$
such that $X=X_1+iX_2$. Since $\tilde{\Psi}$ is holomorphic,
$$d\tilde{\Psi}(X)=d\tilde{\Psi}(X_1)+id\tilde{\Psi}(X_2)=d\Psi(X_1)+id\Psi(X_2)=0.$$
\end{proof}

\end{document}